\newtheorem{theorem}{Theorem}[section] 
\newtheorem{proposition}[theorem]{Proposition} 
\newtheorem{definition}[theorem]{Definition} 
\theoremstyle{remark} 
\newtheorem*{remark}{Remark} 
\title{The Resonance Bias Framework: Resonance, Structure, and Arithmetic in Quadrature Error}
\author{William Cook\thanks{Second-year BSc Economics student. This paper and several others were written within a few months as part of an experimental independent research program exploring the use of AI tools (e.g., Gemini Pro 2.5, 4o, and o4-mini-high) to accelerate paper writing while maintaining mathematical rigor.}}
\date{April 22, 2025} 
\begin{document}

\maketitle

\begin{abstract}
While classical analysis often describes quadrature errors for periodic functions via asymptotic rates or the aliasing sum ($B_P = \sum_{l\neq 0} c_{lP}$), we argue that for the trapezoidal rule on uniform grids, the error possesses a rich, deterministic structure. Analysing the prototype function $f(x) = \sin^2(2\pi k x)$, we derive an analytical error formula governed by a complex resonance function $\tilde{\chi}_P(y)$. This function, connected to the Dirichlet kernel, roots of unity, and discrete Fourier analysis on $\mathbb{Z}/P\mathbb{Z}$, acts as a spectral filter, linking the error to the interaction between the function's frequency content and the grid structure via arithmetic properties (such as $k/P$) and geometric phase cancellation, visualised through vector averaging on the unit circle. Our Resonance Bias Framework (RBF) generalises this to arbitrary smooth periodic functions, yielding the expression $B_P[f] = \sum_{k\neq 0} c_k \tilde{\chi}_P(k/P)$. While mathematically equivalent to the classical aliasing sum, this formulation reveals the underlying mechanism: quadrature error arises from a structured resonance phenomenon in which the grid filter $\tilde{\chi}_P$ selectively modulates the spectrum. The RBF thus provides a perspective on numerical integration error not as mere aliasing noise, but as a structured, interpretable outcome governed by number theory and geometric structure—offering insight into the error structure at finite resolution beyond traditional asymptotic or statistical approaches.
\end{abstract}

\textbf{Keywords:} Numerical Integration, Trapezoidal Rule, Quadrature Error, Periodic Functions, Uniform Grid, Resonance Bias Framework (RBF), Fourier Analysis, Aliasing, Resonance, Spectral Filtering, Deterministic Error Structure, Geometric Phase Cancellation, Vector Averaging, Finite Abelian Groups, Dirichlet Kernel, Roots of Unity, Number Theory, Analytical Error Formula.

\section{Introduction}

\subsection{Context}
Numerical integration, or quadrature, is a fundamental tool across science and engineering. Among the simplest methods, the trapezoidal rule is widely used, particularly for periodic functions where it is known to exhibit surprisingly high accuracy.

\subsection{Classical View}
Standard error analysis, often using the Euler-Maclaurin formula or Fourier analysis \cite{Trefethen2013, SteinShakarchi2003}, attributes the trapezoidal rule error for periodic functions primarily to the function's smoothness, or more specifically, to the phenomenon of aliasing of Fourier coefficients. The error \(B_P[f]\) for a $P$-point rule is typically expressed as
\[
B_P[f] = \sum_{l \in \mathbb{Z} \setminus \{0\}} c_{lP},
\]
where $c_k$ are the Fourier coefficients of the integrand and only terms whose indices are multiples of $P$ contribute. Errors are often viewed in aggregate as bounded noise or analysed asymptotically based on convergence rates (e.g., $O(P^{-s})$ for $C^s$ functions, or exponential decay $O(e^{-\gamma P})$ for analytic functions \cite{TrefethenWeideman2014}).

\subsection{Motivation}
While these classical results provide essential convergence rates and bounds, they often obscure the detailed, deterministic structure inherent in the error itself, especially for a finite number of grid points $P$. By focusing on asymptotic behavior or treating the error statistically, classical analysis can obscure the precise, non-asymptotic structure present for any given $P$. We propose that for periodic functions on uniform grids, this error is not random noise but a structured signal possessing distinct algebraic, geometric, and number-theoretic properties. The "Resonance Bias Framework" (RBF) presented here aims to reveal, analyse, and leverage this underlying structure.\footnote{We adopt the name Resonance Bias Framework (RBF) to emphasise the grounding of the framework in the frequency-grid interaction quantified by the resonance function, moving away from earlier conceptualisations that might have implied broader universality than currently demonstrated.}

\subsection{Approach}
Our analysis begins with an analytically tractable prototype function, $f(x) = \sin^2(2\pi k x)$. Direct calculation of the $P$-point trapezoidal sum for this function reveals the emergence of a key complex resonance function, $\tilde{\chi}_P(y)$, which precisely encapsulates the interaction between the function's frequency content and the discretisation grid.

\subsection{Contribution}
We define and analyse the complex resonance function $\tilde{\chi}_P(y)$ and its real part $\chi_P(y)$, elucidating their fundamental properties and interpretations—geometric, algebraic, and number-theoretic. We show that $\tilde{\chi}_P(y)$ acts as a spectral filter, quantifying the grid's response to input frequencies. We then generalise the bias formula for arbitrary smooth periodic functions (specifically, those whose Fourier series converge absolutely), demonstrating that the RBF bias formulation,
\[
B_P[f] = \sum_{k \neq 0} c_k \tilde{\chi}_P(k/P),
\]
is mathematically equivalent to the classical aliasing sum ($\sum_{l \neq 0} c_{lP}$) but provides mechanistic insight into how the error arises from grid resonance and structured phase cancellation.

\subsection{Outline}
Section 2 briefly reviews the classical Fourier error framework based on aliasing. Section 3 develops the RBF by analysing the $\sin^2$ prototype function, leading to the definition of the resonance function $\tilde{\chi}_P$. Section 4 delves into the mathematical properties and interpretations (geometric, algebraic, number-theoretic) of $\tilde{\chi}_P(y)$. Section 5 presents the generalised RBF error formula, proves its equivalence to the classical result, and extends the framework to two dimensions. Section 6 discusses the broader implications of the RBF perspective. Section 7 outlines future research directions and provides concluding remarks.

\section{The Classical Fourier Error Analysis Framework (Brief Review)}

Classical analysis of the trapezoidal rule for a 1-periodic function $f(x)$ often starts with its Fourier series representation:
\[
f(x) = \sum_{k=-\infty}^{\infty} c_k e^{2\pi i k x}
\]
where $c_k = \int_0^1 f(x) e^{-2\pi i k x} dx$. The exact integral over $[0,1]$ is $I[f] = c_0$.

The $P$-point composite trapezoidal rule approximation is given by:
\[
I_P[f] = \frac{1}{P} \sum_{j=0}^{P-1} f\left(\frac{j}{P}\right)
\]
Using Poisson summation or direct substitution of the Fourier series into the sum and leveraging properties of roots of unity (specifically, that $\sum_{j=0}^{P-1} (e^{2\pi i k/P})^j = P$ if $k \equiv 0 \pmod P$ and 0 otherwise), the error \(B_P[f] = I_P[f] - I[f]\) can be shown \cite{Trefethen2013, SteinShakarchi2003, TrefethenWeideman2014} to be:
\begin{equation} \label{eq:classical_error}
B_P[f] = \sum_{l \in \mathbb{Z} \setminus \{0\}} c_{lP}
\end{equation}
This formula is typically interpreted as aliasing: the discrete sum $I_P[f]$ equals the sum of all Fourier coefficients whose indices are multiples of $P$. The error arises because the discrete sampling cannot distinguish frequencies $k$ from $k+lP$. The rapid decay of $c_k$ for smooth functions explains the high accuracy and convergence rates ($O(P^{-s})$ or $O(e^{-\gamma P})$).

\section{The Resonance Bias Framework: Analysis of a Prototype Function}

The Resonance Bias Framework (RBF) posits that numerical integration errors, particularly for periodic functions evaluated on uniform grids using rules like the trapezoidal rule, possess a deterministic structure rooted in algebraic and geometric principles. To elucidate this framework, we analyse the trapezoidal rule's bias for the prototype function $f(x)=\sin^2(2\pi kx)$, where $k \in \mathbb{R}^+$ defines the frequency and $x \in [0,1]$. This choice, characterised by a single dominant frequency component, permits an exact, analytical derivation of the bias, revealing the emergence of a fundamental \textit{resonance function} that governs the error's behavior. This approach contrasts with classical error bounds, such as those derived from the Euler-Maclaurin formula or based solely on function smoothness (e.g., \cite{Henrici1982}), which often provide asymptotic decay rates (e.g., $O(P^{-2})$ for $f \in C^2$) but obscure the precise, often oscillatory, error structure present at a finite number of grid points $P$. Our analysis unveils this underlying structure, connecting it directly to the arithmetic interaction between the function's frequency content and the discretisation grid.

\paragraph{3.1 The Prototype Function and Quadrature Setup}
We consider the function $f(x)=\sin^2(2\pi kx)$, where $k>0$ is the frequency parameter. We extend $f(x)$ periodically from $[0,1]$ to $\mathbb{R}$. For analytical convenience, let $m=2k$, where \(m\) can be any positive real number, representing twice the fundamental frequency \(k\). The function can then be expressed in terms of this effective frequency $m$:
\[
f(x) = \frac{1}{2} - \frac{1}{2} \cos(2\pi m x)
\]
We assume $m>0$. Our objective is to analyse the bias $B_P[f]$ of the $P$-point composite trapezoidal rule ($P \in \mathbb{Z}^+, P \ge 2$) when approximating the integral $I[f]=\int_0^1 f(x) dx$. The trapezoidal rule approximation is given by the discrete average over the uniform grid $x_j = j/P$ for $j=0,1,\dots,P-1$:
\[
I_P[f] = \frac{1}{P} \sum_{j=0}^{P-1} f\left(\frac{j}{P}\right)
\]
The bias, or quadrature error, is defined as the difference between the approximation and the exact integral:
\[
B_P[f] = I_P[f] - I[f]
\]

\paragraph{3.2 Analytical Integral Evaluation}
The exact integral $I[f]$ is computed directly from the cosine form of $f(x)$:
\[
I[f] = \int_0^1 \left( \frac{1}{2} - \frac{1}{2} \cos(2\pi m x) \right) dx = \frac{1}{2} - \frac{1}{2} \int_0^1 \cos(2\pi m x) \, dx
\]
Since $m>0$, the cosine integral evaluates to:
\[
\int_0^1 \cos(2\pi m x) \, dx = \left[ \frac{\sin(2\pi m x)}{2\pi m} \right]_0^1 = \frac{\sin(2\pi m)}{2\pi m}
\]
Thus, the exact integral is:
\[
I[f] = \frac{1}{2} - \frac{\sin(2\pi m)}{4\pi m}
\]
We introduce the continuous correction term $C(m) := \frac{\sin(2\pi m)}{4\pi m}$. Note that \(C(m) = 0\) if and only if \(m\) is a non-zero integer, reflecting the exact integration of integer-frequency cosines over \([0,1]\). This term represents the deviation of the exact integral from the function's mean value of $1/2$. For non-integer $m$, $C(m)$ is an oscillatory term decaying as $O(m^{-1})$, bounded by $|C(m)| \le \frac{1}{4\pi m}$. In the limit $m \to 0$, L'Hôpital's rule gives $\lim_{m \to 0} C(m)=1/2$. Using this definition, the exact integral can be expressed concisely as:
\[
I[f] = \frac{1}{2} - C(m)
\]

\paragraph{3.3 Discrete Approximation via Trapezoidal Rule}
Applying the $P$-point trapezoidal rule to $f(x)$ yields:
\[
I_P[f] = \frac{1}{P} \sum_{j=0}^{P-1} \sin^2\left(2\pi k \frac{j}{P}\right) = \frac{1}{P} \sum_{j=0}^{P-1} \left( \frac{1}{2} - \frac{1}{2} \cos\left(2\pi m \frac{j}{P}\right) \right)
\]
Separating the constant term from the finite cosine sum gives:
\[
I_P[f] = \frac{1}{P} \left( \sum_{j=0}^{P-1} \frac{1}{2} \right) - \frac{1}{2P} \sum_{j=0}^{P-1} \cos\left(2\pi \frac{m j}{P}\right)
\]
This simplifies to:
\[
I_P[f] = \frac{1}{2} - \frac{1}{2P} S_P(m)
\]
where $S_P(m)$ denotes the finite cosine sum central to the discrete approximation:
\[
S_P(m) := \sum_{j=0}^{P-1} \cos\left(2\pi \frac{m j}{P}\right)
\]
The behavior of the trapezoidal sum hinges entirely on the structure of $S_P(m)$.

\paragraph{3.4 Emergence of the Resonance Function}
To analyse the structure of $S_P(m)$, we introduce the complex resonance function $\tilde{\chi}_P(y)$ and its real part $\chi_P(y)$, formally defined in Section 4 (Definition \ref{def:resonance_functions}). For now, we use the definition:
\[
\chi_P(y) := \frac{1}{P} \sum_{j=0}^{P-1} \cos(2\pi y j)
\]
The real resonance function $\chi_P(y)$ quantifies the degree of constructive or destructive interference arising when sampling a cosine wave of relative frequency $y$ (relative to the sampling frequency $P$) on the $P$-point uniform grid.
Using this definition with the relative frequency $y=m/P$, the sum $S_P(m)$ is directly proportional to $\chi_P(m/P)$:
\[
S_P(m) = P \cdot \chi_P\left(\frac{m}{P}\right)
\]
Substituting this relation back into the expression for the discrete sum $I_P[f]$ provides a remarkably simple form:
\[
I_P[f] = \frac{1}{2} - \frac{1}{2P} \left( P \cdot \chi_P\left(\frac{m}{P}\right) \right) = \frac{1}{2} - \frac{1}{2} \chi_P\left(\frac{m}{P}\right)
\]
This result directly links the discrete approximation to the resonance function evaluated at the relative frequency $m/P$.

\paragraph{3.5 The Deterministic Bias Theorem for $\mathbf{\sin^2(2\pi kx)}$}
Having derived analytical expressions for both the continuous integral $I[f]$ and the discrete sum $I_P[f]$, we arrive at the exact bias formula for the prototype function.

\begin{theorem}[Deterministic Bias for $\mathbf{\sin^2}$] \label{thm:bias_sin2}
Let $f(x)=\sin^2(2\pi kx)$ with $m=2k>0$. For the $P$-point composite trapezoidal rule ($P \ge 2$), the bias $B_P[f]=I_P[f]-I[f]$ is given exactly by:
\begin{equation} \label{eq:bias_sin2}
B_P[f] = -\frac{1}{2} \chi_P\left(\frac{m}{P}\right) + C(m)
\end{equation}
where $\chi_P(y)$ is the real resonance function (Definition \ref{def:resonance_functions}) and $C(m)=\frac{\sin(2\pi m)}{4\pi m}$.
\end{theorem}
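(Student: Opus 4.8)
The plan is to prove the identity by direct combination of the two closed-form expressions already established in Sections 3.2 and 3.4, since the theorem is precisely the difference $I_P[f] - I[f]$ of quantities that have been computed. First I would recall the exact integral, obtained by term-by-term integration of the cosine form of $f$,
\[
I[f] = \frac{1}{2} - C(m), \qquad C(m) = \frac{\sin(2\pi m)}{4\pi m},
\]
valid for all $m > 0$ (with the removable value $C(0) = 1/2$ by L'Hôpital, and $C(m)=0$ at nonzero integers). Next I would recall the discrete approximation in resonance form,
\[
I_P[f] = \frac{1}{2} - \frac{1}{2}\chi_P\!\left(\frac{m}{P}\right),
\]
whose derivation rests on the single algebraic identity $S_P(m) = P\,\chi_P(m/P)$. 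This identity is immediate from the definition $\chi_P(y) = \frac{1}{P}\sum_{j=0}^{P-1}\cos(2\pi y j)$ evaluated at $y = m/P$, which gives $\chi_P(m/P) = \frac{1}{P}\sum_{j=0}^{P-1}\cos(2\pi m j/P) = S_P(m)/P$.

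With both expressions in hand, the proof concludes by subtraction. Forming
\[
B_P[f] = I_P[f] - I[f] = \left(\frac{1}{2} - \frac{1}{2}\chi_P\!\left(\frac{m}{P}\right)\right) - \left(\frac{1}{2} - C(m)\right),
\]
the two constant terms $1/2$ cancel identically, leaving $B_P[f] = -\frac{1}{2}\chi_P(m/P) + C(m)$, which is exactly \eqref{eq:bias_sin2}.

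I do not expect a genuine obstacle here: all of the analytic content is front-loaded into the integral evaluation (Section 3.2) and into the recognition that the finite cosine sum $S_P(m)$ is nothing but a rescaling of the resonance function (Section 3.4). The only points meriting a sentence of care are that $\chi_P(m/P)$ is well defined for every real argument because it is a finite sum, so no convergence question arises for this single-frequency prototype, and that the closed form for $C(m)$ is continuous across the integer frequencies, so that the stated bias formula holds uniformly for all $m > 0$ rather than only in the generic non-integer regime. I would therefore present the result as an immediate corollary of the two preceding subsections rather than as an independent computation.
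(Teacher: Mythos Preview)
Your proposal is correct and matches the paper's own proof essentially line for line: both recall the closed forms $I[f]=\tfrac12-C(m)$ and $I_P[f]=\tfrac12-\tfrac12\chi_P(m/P)$ from Sections~3.2 and~3.4 and then obtain \eqref{eq:bias_sin2} by direct subtraction. Your additional remarks on well-definedness of $\chi_P$ and continuity of $C(m)$ are fine but not strictly needed for the argument.
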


\begin{proof}
The bias is obtained by direct subtraction using the derived expressions for $I_P[f]$ and $I[f]$:
\[
B_P[f] = \left( \frac{1}{2} - \frac{1}{2} \chi_P\left(\frac{m}{P}\right) \right) - \left( \frac{1}{2} - C(m) \right)
\]
Algebraic simplification immediately yields:
\[
B_P[f] = -\frac{1}{2} \chi_P\left(\frac{m}{P}\right) + C(m)
\]
This completes the proof.
\end{proof}

\begin{remark}
If \(m/P\) is an integer \(n\), then \(\chi_P(m/P) = \chi_P(n) = 1\) (by Proposition \ref{prop:chi_properties}). In this resonant case, the bias becomes \(B_P[f] = -1/2 + C(m)\). If \(m\) itself is also an integer, \(C(m)=0\), yielding a bias of exactly \(-1/2\).
\end{remark}

Theorem \ref{thm:bias_sin2} forms a cornerstone of the Resonance Bias Framework. It demonstrates that the bias for this fundamental periodic function is not merely statistical noise or a quantity described only by its asymptotic decay rate, but rather a precisely determined value. The bias is primarily controlled by the resonance term $\chi_P(m/P)$, which oscillates between $-1$ and $1$ as a function of the arithmetic relationship between the frequency $m$ and the grid size $P$. The term $C(m)$ represents a frequency-dependent offset independent of the grid resolution $P$.

This analytical formula \eqref{eq:bias_sin2} provides a structurally-focused perspective compared to classical Fourier error analysis for general periodic functions. While approaches like \cite{Trefethen2013} express the error as an infinite sum over aliased Fourier coefficients (\(B_P = \sum_{l \neq 0} c_{lP}\), see \eqref{eq:classical_error}), the RBF result \eqref{eq:bias_sin2} isolates the contribution of the function's primary frequency content (via $m$) through the action of the interpretable filter $\chi_P(m/P)$. The resonance function $\chi_P(m/P)$ quantifies the extent to which the frequency $m$ constructively or destructively interferes with the grid structure, effectively measuring how $m$ aliases onto the grid's natural frequencies. This connection, formalised in Section 5, highlights how the RBF interprets the phenomenon of aliasing as a structured resonance effect governed by $\chi_P$.

\paragraph{3.6 Resonance Landscape and Comparison with Classical Bounds}
The exact structure of the bias, as dictated by the resonance function $\chi_P(m/P)$, can be visualised as a 'resonance landscape' plotted against the relative frequency $y=m/P$, as shown in Figure \ref{fig:chi_P_landscape}. This landscape exhibits sharp peaks of height 1 at integer values $y=0,1,2,\dots$, corresponding to full resonance where the frequency $m$ is an integer multiple of $P$. Between these peaks, the function oscillates rapidly, exhibiting exact zeros at rational values $y=n/P$ for integers $n$ such that $n \not\equiv 0 \pmod{P}$. These zeros correspond to perfect phase cancellation among the terms in the defining sum of $\chi_P$. The plot visually demonstrates how the quadrature bias $B_P[f]$, dominated by the term $-\frac{1}{2}\chi_P(m/P)$, can be determined in magnitude and sign by locating the value $m/P$ (indicated by the star in Figure \ref{fig:chi_P_landscape}) within this intricate, structured landscape.

\begin{figure}[htbp]
\centering
\includegraphics[width=0.8\textwidth]{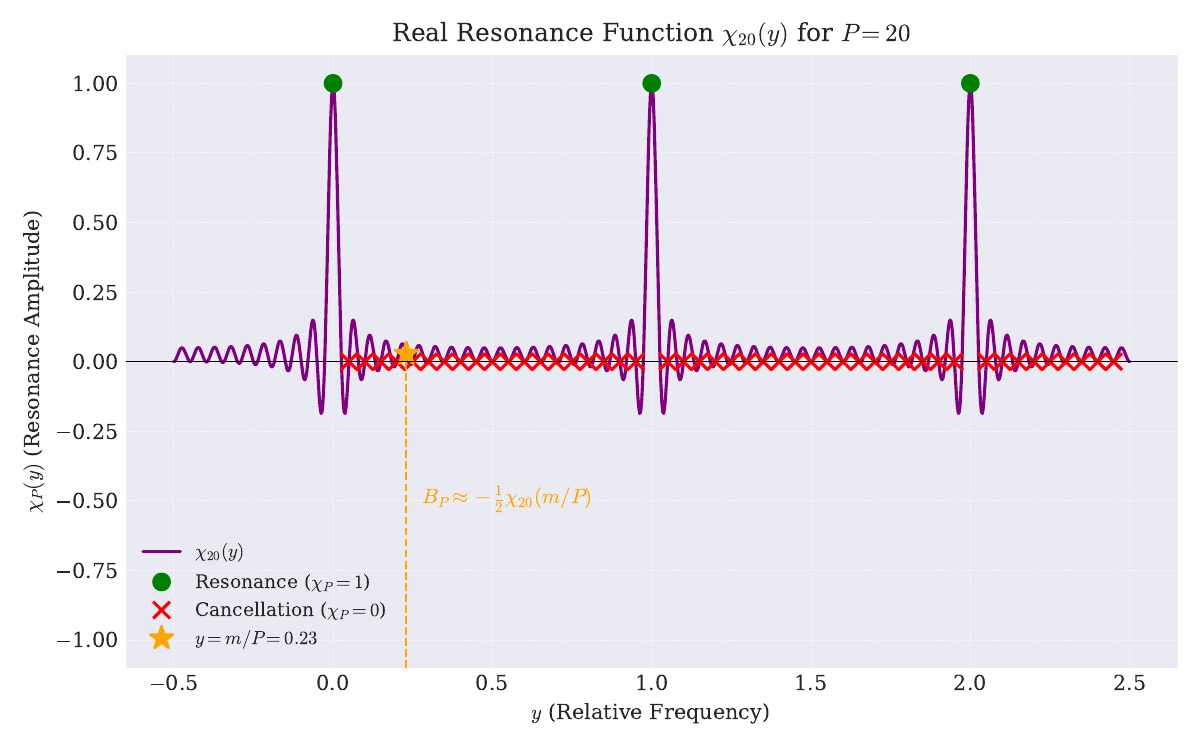} 
\caption{The Real Resonance Function $\chi_P(y)$ for $P=20$, illustrating the resonance landscape. The function $\chi_{20}(y)$ (purple line) quantifies the grid's response. Peaks of height 1 (green circles) indicate full resonance at integer relative frequencies $y=0, 1, 2$. Zeros (red crosses) indicate perfect cancellation at $y=n/P$ where $n \not\equiv 0 \pmod{P}$. The orange star marks $y=m/P=0.23$ corresponding to the prototype function $f(x)=\sin^2(2\pi(2.3)x)$. Theorem \ref{thm:bias_sin2} implies the bias $B_P[f] \approx -\frac{1}{2}\chi_{20}(0.23)$, indicated qualitatively by the dashed orange line.}
\label{fig:chi_P_landscape}
\end{figure}

This detailed view contrasts with standard error bounds. For a smooth periodic function $f \in C^2[0,1]$, classical analysis provides the bound $|B_P[f]| \le \frac{1}{12P^2} \|f''\|_\infty$ \cite{Trefethen2013}. For our prototype $f(x)=\sin^2(2\pi kx)$, we have $f''(x) = -(2\pi m)^2 \cos(2\pi mx)$, giving $\|f''\|_\infty = (2\pi m)^2 = 4\pi^2 m^2$. The classical bound thus becomes:
\[
|B_P[f]| \le \frac{4\pi^2 m^2}{12 P^2} = \frac{\pi^2 m^2}{3 P^2}
\]
While this bound holds (since \(f \in C^\infty\)) and correctly captures the overall $O(P^{-2})$ decay with respect to the grid size $P$, it averages out the detailed error structure revealed by $\chi_P(m/P)$, which depends critically on the arithmetic nature of $m/P$. The analytical formula \eqref{eq:bias_sin2} shows that the actual error depends sensitively on the value of $\chi_P(m/P)$. For instance, if $m/P$ is near an integer $n$, $\chi_P(m/P)$ is close to 1, and the bias $B_P[f]$ approaches $-1/2+C(m)$. The magnitude $|B_P[f]|$ can thus be close to $1/2$, potentially much larger than the $O(P^{-2})$ bound suggests, especially for moderate $P$ or large $m$. Conversely, if $m/P$ is near a cancellation point $n/P$ ($n \not\equiv 0 \pmod{P}$), $\chi_P(m/P)$ is close to zero, leading to a bias $B_P[f] \approx C(m)$, which could be much smaller than the bound predicts, especially if $m$ is large making $C(m)$ small.

The behavior in extreme parameter regimes further illustrates the RBF's precision. As $m \to 0$, we have $C(m) \to 1/2$ and $\chi_P(m/P) \to \chi_P(0)=1$. Substituting into \eqref{eq:bias_sin2}, $B_P[f] \to -1/2+1/2=0$, correctly reflecting the zero error for integrating a constant function (since $f(x) \to 0$ as $m \to 0$). When $m/P$ is near an integer $n$, where $\chi_P(m/P) \approx 1$, the bias magnitude $|B_P[f]| \approx |-1/2+C(m)|$. This sensitivity to near-resonant rational alignments ($m/P \approx n$) underscores the importance of considering grid-frequency interactions in applications involving high frequencies or requiring high precision, such as long-term dynamical simulations or signal processing. The RBF provides the framework for analysing these structurally determined error patterns.

The validity of Theorem \ref{thm:bias_sin2} and its ability to capture the detailed structure not captured by asymptotic bounds is confirmed numerically in Figure \ref{fig:bias_validation}. This figure shows perfect agreement between the directly computed error $|I_P - I|$ and the RBF prediction $|B_P[f]|$ from the theorem across a range of $P$ values. Notably, the RBF correctly determines details such as the error plateau near $P=92$, where the cancellation condition $m/P=1/20$ is met, demonstrating the framework's analytical power beyond asymptotic bounds.

\begin{figure}[htbp]
\centering
\includegraphics[width=0.8\textwidth]{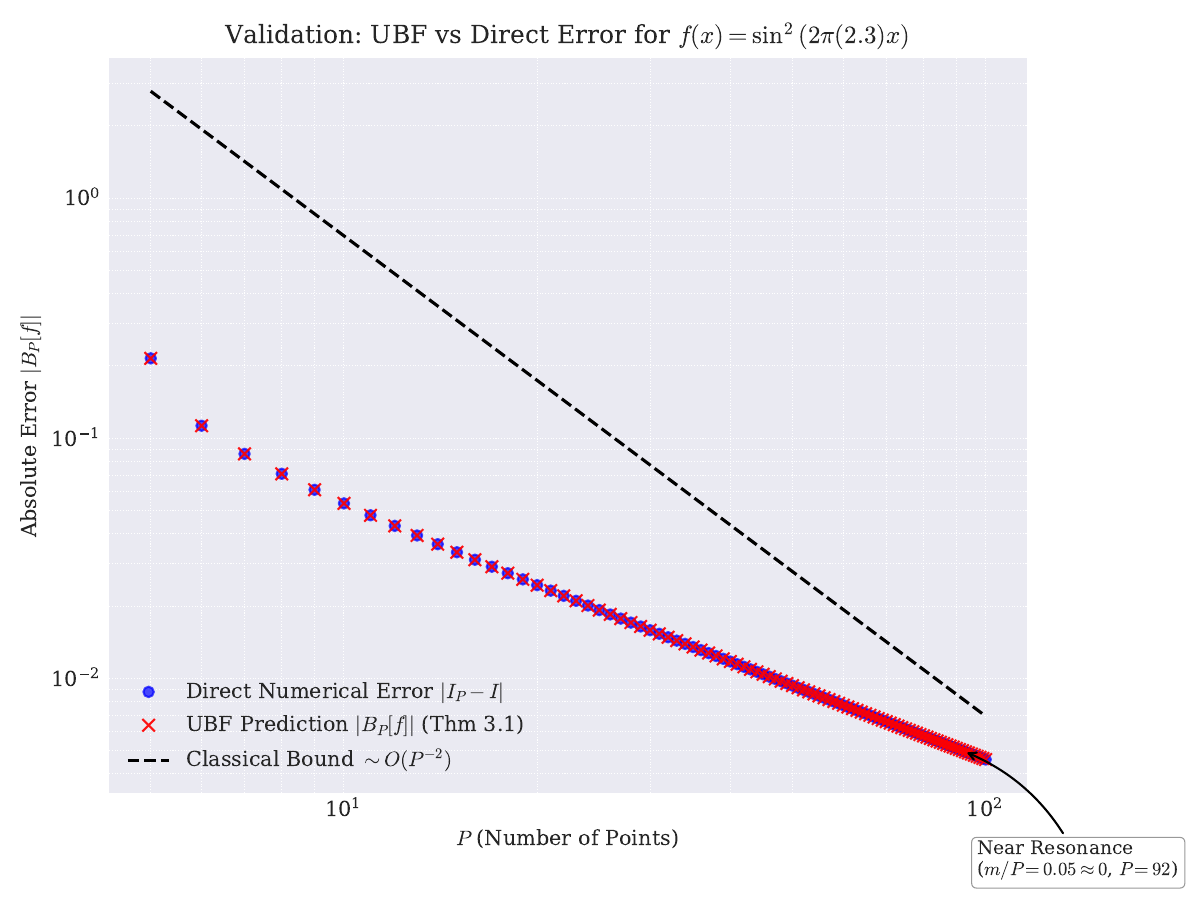} 
\caption{Validation: RBF vs Direct Error for $f(x) = \sin^2(2\pi(2.3)x)$. Log-log plot of the absolute error $|B_P[f]|$ versus the number of points $P$. The Direct Numerical Error $|I_P - I|$ (blue dots) is perfectly matched by the RBF Prediction $|B_P[f]|$ from Theorem \ref{thm:bias_sin2} (red crosses), confirming the formula. Both oscillate around the classical $O(P^{-2})$ bound (dashed line). The annotation highlights the cancellation case at $P=92$, where $m/P = 4.6/92 = 1/20$, resulting in $\chi_{92}(1/20)=0$ and the error plateauing at $B_{92}[f] \approx C(m)$.}
\label{fig:bias_validation}
\end{figure}

\section{The Resonance Function \texorpdfstring{$\tilde{\chi}_P(y)$}{χ̃\_P(y)}: Properties and Interpretation}

Section 3 established that the bias of the trapezoidal rule for the prototype function $f(x)=\sin^2(2\pi kx)$ is governed by the real resonance function $\chi_P(y)$, evaluated at the relative frequency $y=m/P$ (where $m=2k$). This section delves into the mathematical structure and interpretation of $\chi_P(y)$ and its complex counterpart $\tilde{\chi}_P(y)$. These functions are pivotal to the Resonance Bias Framework (RBF), as they precisely quantify the interaction between a function's frequency content and the discrete sampling grid. A thorough understanding of their properties illuminates \textit{why} the quadrature error exhibits characteristic resonance patterns and provides the essential foundation for generalizing the RBF beyond the initial prototype function in Section 5. We demonstrate that $\tilde{\chi}_P(y)$ acts as a structured spectral filter, forging a clear link between the RBF perspective and classical concepts of aliasing and harmonic analysis on finite abelian groups.

\paragraph{4.1 Formal Definition and Role in the RBF}
We analyse the resonance functions for grid size $P \in \mathbb{Z}^+, P \ge 2$, consistent with the trapezoidal rule context of Section 3. (The case $P=1$ is trivial, yielding $\tilde{\chi}_1(y)=e^{2\pi iy \cdot 0}=1$ for all $y$).

\begin{definition}[Resonance Functions] \label{def:resonance_functions}
For $P \in \mathbb{Z}^+, P \ge 1$ and $y \in \mathbb{R}$, the complex resonance function $\tilde{\chi}_P(y)$ and the real resonance function $\chi_P(y)$ are defined as:
\begin{equation} \label{eq:def_chi_tilde}
\tilde{\chi}_P(y) := \frac{1}{P} \sum_{j=0}^{P-1} e^{2\pi i y j}
\end{equation}
\begin{equation} \label{eq:def_chi_real}
\chi_P(y) := \text{Re}[\tilde{\chi}_P(y)] = \frac{1}{P} \sum_{j=0}^{P-1} \cos(2\pi y j)
\end{equation}
\end{definition}

Theorem \ref{thm:bias_sin2} revealed $\chi_P(m/P)$ as the key determinant of the bias for the $\sin^2$ prototype. The complex function $\tilde{\chi}_P(y)$ generalises this role to handle complex Fourier coefficients of arbitrary periodic functions (Section 5). It effectively measures the grid's response to a frequency $y$, acting as a filter that reveals how input frequencies alias onto the grid's resonant modes ($k=lP$). This provides structural insight into the aliasing mechanism described by classical error formulas, such as \(B_P = \sum_{l \neq 0} c_{lP}\) \eqref{eq:classical_error} \cite{Trefethen2013}, by explicitly characterizing the grid's frequency-selective behavior.

\paragraph{4.2 Fundamental Properties}
The analytical behavior of the resonance functions stems from several fundamental properties:

\begin{proposition}[Properties of $\mathbf{\tilde{\chi}_P}$ and $\mathbf{\chi_P}$] \label{prop:chi_properties}
For $P \ge 2$, the functions $\tilde{\chi}_P(y)$ and $\chi_P(y)$ satisfy:
\begin{enumerate}
    \item \textbf{Periodicity and Integer Values (Resonance):} $\tilde{\chi}_P(y+1)=\tilde{\chi}_P(y)$, $\chi_P(y+1)=\chi_P(y)$. Furthermore, for any integer $k \in \mathbb{Z}$, $\tilde{\chi}_P(k)=1$ and $\chi_P(k)=1$.
    \begin{proof}[Proof Sketch] 
    Periodicity follows from $e^{2\pi i (y+1) j} = e^{2\pi i y j}$. For $y=k \in \mathbb{Z}$, $e^{2\pi i k j} = 1$, so $\tilde{\chi}_P(k) = \frac{1}{P} \sum 1 = 1$.
    \end{proof}

    \item \textbf{Conjugate Symmetry and Parity:} $\overline{\tilde{\chi}_P(y)} = \tilde{\chi}_P(-y)$, which implies $\chi_P(y)$ is an even function, $\chi_P(-y)=\chi_P(y)$.
    \begin{proof}[Proof Sketch]
    $\tilde{\chi}_P(-y) = \frac{1}{P} \sum e^{-2\pi i y j} = \overline{\tilde{\chi}_P(y)}$. Taking real parts gives $\chi_P(-y)=\chi_P(y)$.
    \end{proof}

    \item \textbf{Boundedness:} $|\tilde{\chi}_P(y)| \le 1$ and $|\chi_P(y)| \le 1$. Equality $|\tilde{\chi}_P(y)|=1$ holds if and only if $y \in \mathbb{Z}$.
    \begin{proof}[Proof Sketch]
    Triangle inequality: $|\tilde{\chi}_P(y)| \le \frac{1}{P} \sum |e^{2\pi i y j}| = 1$. Equality requires alignment, i.e., $y \in \mathbb{Z}$.
    \end{proof}

    \item \textbf{Rational Zeros (Cancellation):} If $y=n/P$ where $n \in \mathbb{Z}$ and $n \not\equiv 0 \pmod{P}$, then $\tilde{\chi}_P(n/P)=0$ and $\chi_P(n/P)=0$.
    \begin{proof}[Proof Sketch]
    Let $\zeta = e^{2\pi i n/P} \neq 1$. The geometric sum $\sum_{j=0}^{P-1} \zeta^j = \frac{1-\zeta^P}{1-\zeta}$. Since $\zeta^P = e^{2\pi i n} = 1$, the sum is zero. Thus $\tilde{\chi}_P(n/P)=0$.
    \end{proof}

    \item \textbf{Continuity:} $\tilde{\chi}_P(y)$ and $\chi_P(y)$ are continuous functions for all $y \in \mathbb{R}$.
    \begin{proof}[Proof Sketch]
    Finite sums of continuous functions.
    \end{proof}
\end{enumerate}
\end{proposition}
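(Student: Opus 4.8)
The plan is to prove all five properties directly from the defining finite sum $\tilde{\chi}_P(y) = \frac{1}{P}\sum_{j=0}^{P-1} e^{2\pi i y j}$, deriving the corresponding statement for $\chi_P = \mathrm{Re}\,\tilde{\chi}_P$ in each case simply by taking real parts. Four of the five items reduce to one-line manipulations of exponentials together with a single geometric-series identity; the only point that requires genuine care is the equality case in the boundedness statement.

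For Property 1, I would factor $e^{2\pi i(y+1)j} = e^{2\pi i y j}\cdot e^{2\pi i j}$ and use $e^{2\pi i j}=1$ for $j\in\mathbb{Z}$ to obtain periodicity; evaluating at $y=k\in\mathbb{Z}$ is then immediate, since every summand equals $1$ and the average is $1$. Property 2 follows from $e^{-2\pi i y j} = \overline{e^{2\pi i y j}}$, so that $\tilde{\chi}_P(-y)$ is the complex conjugate of $\tilde{\chi}_P(y)$; the parity of $\chi_P$ is then just the fact that a complex number and its conjugate share the same real part. Property 4 is the geometric-series computation: writing $\zeta = e^{2\pi i n/P}$, which is $\neq 1$ precisely because $P\nmid n$, the sum collapses to $(1-\zeta^P)/(1-\zeta)$, and $\zeta^P = e^{2\pi i n}=1$ annihilates the numerator. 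Property 5 is immediate, as a finite linear combination of the continuous functions $y\mapsto e^{2\pi i y j}$ is continuous and taking real parts preserves continuity.

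The substantive step is Property 3. The inequality $|\tilde{\chi}_P(y)|\le 1$ is the triangle inequality applied to the $P$ unit vectors $e^{2\pi i y j}$, and $|\chi_P(y)|\le|\tilde{\chi}_P(y)|\le 1$ since the real part of a complex number is bounded by its modulus. The delicate part is the equality characterization $|\tilde{\chi}_P(y)|=1$ if and only if $y\in\mathbb{Z}$. Here I would invoke the rigidity of the complex triangle inequality: for summands of equal modulus, $|\sum_j a_j| = \sum_j |a_j|$ holds exactly when all the $a_j$ coincide. Since $a_0 = e^{0}=1$, equality forces $e^{2\pi i y j}=1$ for every $j$, and in particular—taking $j=1$, which is available precisely because $P\ge 2$—$e^{2\pi i y}=1$, i.e. $y\in\mathbb{Z}$; the converse direction is already Property 1. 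I expect this equality case to be the main obstacle, because it is the one place where an inequality must be promoted to an "if and only if," and the argument genuinely uses the hypothesis $P\ge 2$ (the index $j=1$ must exist for the conclusion $y\in\mathbb{Z}$ to follow).
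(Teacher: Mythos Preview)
Your proposal is correct and mirrors the paper's proof sketches almost exactly, proceeding in each case by direct manipulation of the defining exponential sum and the geometric-series identity. Your treatment of the equality case in Property~3 is in fact more careful than the paper's---which simply asserts ``equality requires alignment, i.e., $y\in\mathbb{Z}$''---and your explicit observation that the hypothesis $P\ge 2$ is genuinely used there (to ensure the index $j=1$ exists) is a nice detail the paper leaves implicit.
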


These properties collectively define $\tilde{\chi}_P(y)$ as a continuous, periodic function mapping $\mathbb{R}$ to the closed unit disk in $\mathbb{C}$. Its behavior is characterised by sharp peaks of magnitude 1 at integer arguments (resonances) and exact zeros at non-integer rational arguments with denominator $P$ (cancellations).

\paragraph{4.3 Closed Form and Connection to Dirichlet Kernel}
For $y \notin \mathbb{Z}$, the geometric sum defining $\tilde{\chi}_P(y)$ yields an explicit closed form. With $r=e^{2\pi i y} \neq 1$:
\[
\tilde{\chi}_P(y) = \frac{1}{P} \sum_{j=0}^{P-1} r^j = \frac{1}{P} \frac{1 - r^P}{1 - r} = \frac{1}{P} \frac{1 - e^{2\pi i P y}}{1 - e^{2\pi i y}}
\]
Using the identity $1 - e^{i\theta} = e^{i\theta/2}(-2i \sin(\theta/2))$, we obtain the standard representation:
\begin{equation} \label{eq:chi_tilde_closed_form}
\tilde{\chi}_P(y) = \frac{1}{P} \frac{e^{i\pi P y}(-2i \sin(\pi P y))}{e^{i\pi y}(-2i \sin(\pi y))} = \frac{1}{P} e^{i\pi (P-1) y} \frac{\sin(\pi P y)}{\sin(\pi y)}
\end{equation}
Taking the real part provides the closed form for $\chi_P(y)$:
\begin{equation} \label{eq:chi_real_closed_form}
\chi_P(y) = \text{Re}[\tilde{\chi}_P(y)] = \frac{1}{P} \frac{\sin(\pi P y)}{\sin(\pi y)} \cos(\pi (P-1) y)
\end{equation}
The continuity at integer values $y=k$ is confirmed by applying L'Hôpital's rule to \eqref{eq:chi_tilde_closed_form}, yielding $\lim_{y \to k} \tilde{\chi}_P(y) = 1$.

The sum \(P \tilde{\chi}_P(y) = \sum_{j=0}^{P-1} e^{2\pi i y j}\) is closely related to the standard Dirichlet kernel \(D_{N}(\theta) = \sum_{n=-N}^{N} e^{in\theta}\) used in Fourier series convergence \cite{Katznelson2004}. Specifically, \(P \tilde{\chi}_P(y) = e^{i \pi (P-1) y} \frac{\sin(\pi P y)}{\sin(\pi y)}\), which involves the same ratio of sines as \(D_{P-1}(2\pi y)\) but with a different phase factor due to the unilateral summation. Both embody the principles of phase interference characterizing discrete sampling.

\paragraph{4.4 Geometric Interpretation: Rotating Arrows Model}
The definition \eqref{eq:def_chi_tilde} admits a clear geometric interpretation, visualised in Figure \ref{fig:arrows_geometric}. Each term $z_j = e^{2\pi i y j}$ represents a unit vector (an "arrow") in the complex plane. The sequence $z_0, z_1, \dots, z_{P-1}$ starts at $z_0=1$ and proceeds by successive rotations, each by an angle of $2\pi y$ radians. The complex resonance function $\tilde{\chi}_P(y)$ is the vector average (centroid) of the endpoints of these $P$ arrows. This model provides intuition for the key properties (illustrated in Figure \ref{fig:arrows_geometric}):
\begin{itemize}
    \item \textbf{Full Resonance ($\mathbf{y \in \mathbb{Z}}$):} Rotation angle is a multiple of $2\pi$. All vectors align at $1+0i$. Average is $\tilde{\chi}_P(k)=1$.
    \item \textbf{Cancellation ($\mathbf{y=n/P, n \not\equiv 0 \pmod{P}}$):} Vectors form the $P$-th roots of unity (possibly repeated), distributed symmetrically. Vector sum is zero, $\tilde{\chi}_P(n/P)=0$.
    \item \textbf{Intermediate Cases:} Partial alignment and cancellation. Average $\tilde{\chi}_P(y)$ lies strictly inside the unit disk ($|\tilde{\chi}_P(y)|<1$).
\end{itemize}
This geometric viewpoint visualises the resonance landscape dictated by the arithmetic nature of $y$.

\begin{figure}[htbp]
\centering
\includegraphics[width=\textwidth]{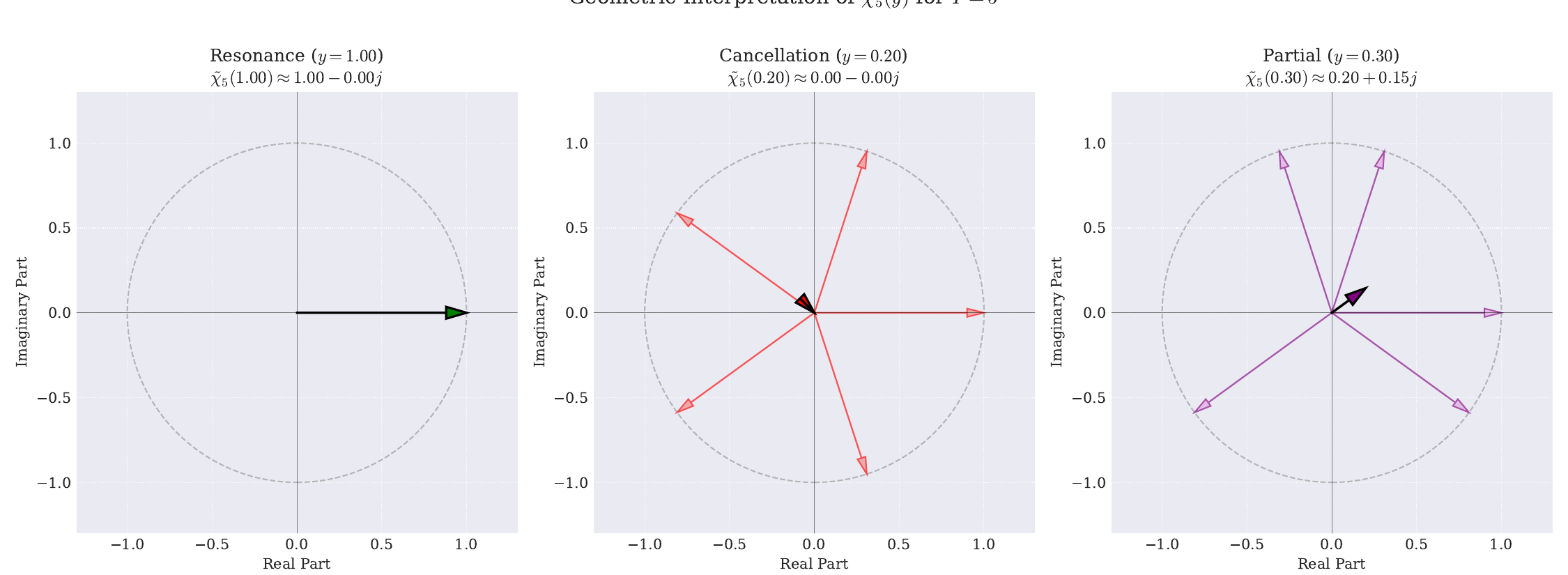} 
\caption{Geometric interpretation of the complex resonance function $\tilde{\chi}_P(y)$ for $P=5$. Each plot shows $P=5$ unit vectors (arrows) starting from the origin, representing $e^{2\pi i y j}$ for $j=0,...,4$. The thick black arrow represents the sum vector scaled by $1/P$, which is $\tilde{\chi}_P(y)$. (Left) Resonance ($y=1.00$): All vectors align, $\tilde{\chi}_5(1) = 1$. (Center) Cancellation ($y=0.20 = 1/5$): Vectors form roots of unity, sum is zero, $\tilde{\chi}_5(0.20) = 0$. (Right) Partial Cancellation ($y=0.30$): Vectors partially cancel, $\tilde{\chi}_5(0.30) \approx 0.20 + 0.15j$.}
\label{fig:arrows_geometric}
\end{figure}

\paragraph{4.5 Connection to Discrete Fourier Transform and $\mathbf{\mathbb{Z}/P\mathbb{Z}}$}
The resonance function is intrinsically linked to harmonic analysis on the finite cyclic group $\mathbb{Z}/P\mathbb{Z}$. The additive characters of this group are $\psi_k(j) = e^{2\pi i k j / P}$ for $k=0, \dots, P-1$, forming an orthonormal basis \cite{SteinShakarchi2003}.
The Discrete Fourier Transform (DFT) uses these characters. For a sequence $g = (g_0, \dots, g_{P-1})$, the $k$-th DFT coefficient is $\hat{g}_k = \sum_{j=0}^{P-1} g_j \overline{\psi_k(j)}$. Considering the constant sequence $g_j = 1$, its $k$-th DFT coefficient (up to conjugation/normalisation) involves our sum:
\[
\sum_{j=0}^{P-1} 1 \cdot \psi_k(j) = \sum_{j=0}^{P-1} e^{2\pi i k j / P} = P \cdot \tilde{\chi}_P(k/P)
\]
The fundamental property $\tilde{\chi}_P(k/P) = \delta_{k \equiv 0 \pmod{P}}$ (from Proposition \ref{prop:chi_properties}), is a direct statement of the character orthogonality relation $\sum_{j=0}^{P-1} \psi_k(j) = P \delta_{k0 \pmod P}$. This connection clarifies the mechanism behind classical error formulas like \(B_P = \sum_{l \neq 0} c_{lP}\) \eqref{eq:classical_error}: the grid sampling, via the filter $\tilde{\chi}_P(k/P)$, selects only frequency components $k=lP$. The RBF provides the explicit filter $\tilde{\chi}_P$ responsible.

\paragraph{4.6 Fine Structure, Convergence, and Number-Theoretic Connections}
The resonance function exhibits additional structure, as detailed in Figure \ref{fig:chi_P_detail}.
\begin{itemize}
    \item \textbf{Near Resonance:} Taylor expansion of $\chi_P(y)$ around an integer $k$ shows quadratic decay. Using the second derivative $\chi_P''(k) = -\frac{2\pi^2 (P-1)(2P-1)}{3}$ (derived from the definition \eqref{eq:def_chi_real}), the expansion $\chi_P(k+\epsilon) = \chi_P(k) + \epsilon \chi_P'(k) + \frac{\epsilon^2}{2} \chi_P''(k) + O(\epsilon^3)$ becomes:
    \begin{equation} \label{eq:taylor_near_resonance}
    \chi_P(k + \epsilon) = 1 - \frac{\pi^2 (P-1)(2P-1)}{3} \epsilon^2 + O(\epsilon^3)
    \end{equation}
    This defines the shape of the main resonance lobe, confirming the peak is a local maximum.

    \item \textbf{Near Zeros:} Near $y=n/P$ ($n \not\equiv 0 \pmod{P}$), $\chi_P(y)$ oscillates rapidly (see Figure \ref{fig:chi_P_detail}), governed by the $\sin(\pi P y)$ numerator in \eqref{eq:chi_real_closed_form}.

    \item \textbf{Zeros and Farey Sequences:} The set of primitive zeros $\{n/P \mid \gcd(n,P)=1\}$ (marked by crosses in Figure \ref{fig:chi_P_detail}) relates to the structure of rational numbers. The distribution of all zeros $n/P$ as $P$ varies mirrors properties of Farey sequences \cite{HardyWright1979}, which order rationals by denominator size, reflecting grid structure and coprimality.

    \item \textbf{Convergence Properties:}
    \begin{remark} 
    As $P \to \infty$:
    \begin{itemize}
        \item \textbf{Distributional Limit:} $P \chi_P(y) \to \sum_{k \in \mathbb{Z}} \delta(y-k)$ (Dirac comb) \cite{Strichartz1994}.
        \item \textbf{Pointwise Limit (Irrational $\mathbf{y}$):} $\lim_{P \to \infty} \tilde{\chi}_P(y) = 0$ by equidistribution (Weyl's criterion) \cite{KuipersNiederreiter1974}.
        \item \textbf{No Pointwise Limit (Rational $\mathbf{y}$):} For $y=n/q$ (lowest terms, $q>1$), $\lim_{P \to \infty} \tilde{\chi}_P(y)$ typically does not exist (depends on $P \pmod q$).
    \end{itemize}
    These highlight the complex interplay between frequency type and grid resolution.
    \end{remark}
\end{itemize}

\begin{figure}[htbp]
\centering
\includegraphics[width=0.8\textwidth]{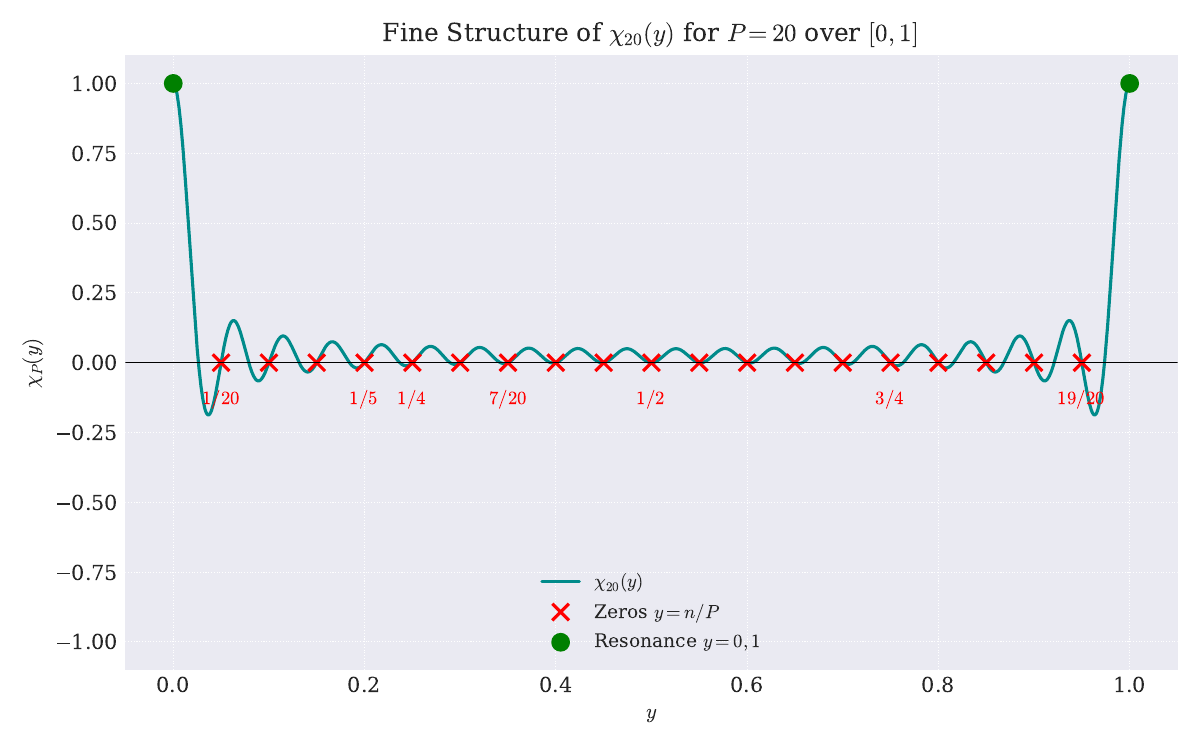} 
\caption{Fine Structure of the real resonance function $\chi_P(y)$ for $P=20$ over the interval $[0, 1]$. The plot shows the function $\chi_{20}(y)$. Resonance peaks ($\chi_P=1$) occur at the endpoints $y=0$ and $y=1$ (green circles). Exact zeros ($\chi_P=0$) occur at $y=n/P$ for $n=1, \dots, 19$ (red crosses). Key rational zeros are labeled (e.g., 1/20, 1/5, 1/4, 7/20, 1/2, 3/4, 19/20), illustrating the cancellation property.}
\label{fig:chi_P_detail}
\end{figure}

\section{Unification with Fourier Error Analysis: The General RBF Bias Formula}
Sections 3 and 4 established the Resonance Bias Framework (RBF) for the prototype function $f(x)=\sin^2(2\pi kx)$, demonstrating the bias is governed by the resonance function $\chi_P(m/P)$. This section generalises these findings to arbitrary smooth, 1-periodic functions. We derive a general bias formula within the RBF employing the complex resonance function $\tilde{\chi}_P(y)$. Crucially, we show this RBF formulation is mathematically equivalent to the classical aliasing sum \eqref{eq:classical_error}. However, the RBF perspective offers structural insight by framing the error as a filtering process, where the grid's response function $\tilde{\chi}_P(y)$ modulates the function's spectrum.

\paragraph{5.1 General Setup for Periodic Functions}
We consider a complex-valued, 1-periodic function $f(x)$ defined on $\mathbb{R}$. We assume that its Fourier series
\[
f(x) = \sum_{k=-\infty}^{\infty} c_k e^{2\pi i k x}, \quad \text{where } c_k = \int_0^1 f(x) e^{-2\pi i k x} dx,
\]
converges absolutely (i.e., $\sum_{k} |c_k| < \infty$). This assumption is crucial as it justifies the interchange of summation orders required in the subsequent derivation of the bias formula. Absolute convergence is guaranteed if, for instance, $f \in C^1(\mathbb{T})$ (the space of continuously differentiable 1-periodic functions on the torus $\mathbb{T} = \mathbb{R}/\mathbb{Z}$). Greater smoothness (e.g., $f \in C^s(\mathbb{T})$ or analytic) leads to faster decay of the coefficients $c_k$, ensuring faster convergence of the quadrature error, as discussed further in Section 5.5. The exact integral of $f$ over one period is simply the zeroth Fourier coefficient: $I[f] = \int_0^1 f(x) dx = c_0$.

\paragraph{5.2 Derivation of the General RBF Bias Formula}
The $P$-point composite trapezoidal rule approximates the integral $I[f]=c_0$ using the average function value over the grid $x_j=j/P$:
\begin{equation}
I_P[f] = \frac{1}{P} \sum_{j=0}^{P-1} f\left(\frac{j}{P}\right), \quad (P \ge 2).
\end{equation}
Substituting the Fourier series representation of $f(x)$ yields:
\[
I_P[f] = \frac{1}{P} \sum_{j=0}^{P-1} \left( \sum_{k=-\infty}^{\infty} c_k e^{2\pi i k (j/P)} \right).
\]
Leveraging the absolute convergence established in Section 5.1 (i.e., $\sum |c_k| < \infty$), we can interchange the order of summation:
\[ 
I_P[f] = \sum_{k=-\infty}^{\infty} c_k \left( \frac{1}{P} \sum_{j=0}^{P-1} e^{2\pi i (k/P) j} \right).
\] 
Recognizing the inner term as the complex resonance function $\tilde{\chi}_P(y)$ evaluated at $y=k/P$ (Definition \ref{def:resonance_functions}):
\[
\frac{1}{P} \sum_{j=0}^{P-1} e^{2\pi i (k/P) j} = \tilde{\chi}_P\left(\frac{k}{P}\right),
\]
we obtain the discrete approximation expressed as the function's spectrum filtered by the resonance function:
\begin{equation} \label{eq:Ip_filtered_spectrum}
I_P[f] = \sum_{k=-\infty}^{\infty} c_k \tilde{\chi}_P\left(\frac{k}{P}\right).
\end{equation}
The bias is the difference between the approximation and the exact integral: $B_P[f] = I_P[f] - I[f] = I_P[f] - c_0$. Since $\tilde{\chi}_P(0/P) = \tilde{\chi}_P(0) = 1$ (Proposition \ref{prop:chi_properties}), the $k=0$ term in the sum \eqref{eq:Ip_filtered_spectrum} is $c_0 \tilde{\chi}_P(0/P) = c_0$. Subtracting this term isolates the bias contributed by the non-zero frequencies:

\begin{theorem}[General RBF Bias Formula] \label{thm:rbf_bias_general}
Let $f(x)$ be a 1-periodic function whose Fourier series $f(x) = \sum_{k \in \mathbb{Z}} c_k e^{2\pi i k x}$ converges absolutely. The bias of the $P$-point composite trapezoidal rule ($P \ge 2$) is given by the analytical expression:\footnote{While this formula is mathematically exact, its practical computation requires knowledge or approximation of the infinite sequence of Fourier coefficients $c_k$.}
\begin{equation} \label{eq:rbf_bias_general}
B_P[f] = \sum_{k \in \mathbb{Z} \setminus \{0\}} c_k \tilde{\chi}_P\left(\frac{k}{P}\right)
\end{equation}
where $\tilde{\chi}_P(y)$ is the complex resonance function defined in Definition \ref{def:resonance_functions}.
\end{theorem}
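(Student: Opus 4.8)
The plan is to establish \eqref{eq:rbf_bias_general} by following the chain of equalities already set up in Section 5.2, so the proof is essentially a matter of justifying the one analytical subtlety—the interchange of summation orders—and then reading off the result. First I would start from the definition of the $P$-point trapezoidal approximation $I_P[f] = \frac{1}{P}\sum_{j=0}^{P-1} f(j/P)$ and substitute the absolutely convergent Fourier series $f(x) = \sum_{k\in\mathbb{Z}} c_k e^{2\pi i k x}$ evaluated at the grid points $x_j = j/P$. This produces a double sum over $j$ and $k$, one finite (the $P$ grid points) and one infinite (the Fourier modes).

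The key step is to interchange the finite sum over $j$ with the infinite sum over $k$. I would justify this rigorously using the hypothesis $\sum_k |c_k| < \infty$: for each fixed $j$, the series $\sum_k c_k e^{2\pi i k j/P}$ converges absolutely since $|e^{2\pi i k j/P}| = 1$, and the finite sum over $j$ of absolutely convergent series may be freely reordered. More carefully, one may invoke the dominated convergence theorem (or Fubini/Tonelli for the counting measure on $\mathbb{Z}$ against the finite sum over $j$), with dominating bound $\sum_j \sum_k |c_k| = P \sum_k |c_k| < \infty$. After the interchange, the inner sum over $j$ is exactly $\frac{1}{P}\sum_{j=0}^{P-1} e^{2\pi i (k/P) j}$, which by Definition \ref{def:resonance_functions} equals $\tilde{\chi}_P(k/P)$, yielding \eqref{eq:Ip_filtered_spectrum}: $I_P[f] = \sum_{k\in\mathbb{Z}} c_k \tilde{\chi}_P(k/P)$.

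Finally I would subtract the exact integral $I[f] = c_0$. Using Proposition \ref{prop:chi_properties}, part 1, we have $\tilde{\chi}_P(0/P) = \tilde{\chi}_P(0) = 1$, so the $k=0$ term in \eqref{eq:Ip_filtered_spectrum} is precisely $c_0 \cdot 1 = c_0$. Subtracting it cancels the $c_0$ contribution and leaves the sum restricted to $k \neq 0$, giving $B_P[f] = \sum_{k\in\mathbb{Z}\setminus\{0\}} c_k \tilde{\chi}_P(k/P)$, which is exactly \eqref{eq:rbf_bias_general}.

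The main obstacle is entirely the interchange of summation orders; everything else is routine algebra and a direct appeal to the stated definition and proposition. I expect the hypothesis of absolute convergence to be not merely convenient but essential here, since without it the reordering could fail, and I would make explicit that this is the sole place where the $\sum_k |c_k| < \infty$ assumption is used. It is worth noting that the proof makes no use of the closed-form expression \eqref{eq:chi_tilde_closed_form} for $\tilde{\chi}_P$ nor of its vanishing at rational arguments; those properties are needed only for the subsequent equivalence with the classical aliasing sum, not for establishing this formula itself.
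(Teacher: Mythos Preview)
Your proposal is correct and follows essentially the same route as the paper's own derivation in Section~5.2: substitute the Fourier series into $I_P[f]$, interchange the finite grid sum with the infinite Fourier sum using absolute convergence, identify the inner sum as $\tilde{\chi}_P(k/P)$, and subtract $c_0$ via $\tilde{\chi}_P(0)=1$. Your justification of the interchange is slightly more explicit (invoking Fubini/Tonelli for counting measure) than the paper's, which simply appeals to absolute convergence, but the argument is otherwise identical.
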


\subsection{Equivalence to Classical Fourier Error Formula}
We now demonstrate that the RBF bias formula \eqref{eq:rbf_bias_general} is mathematically equivalent to the classical aliasing sum \eqref{eq:classical_error}. This equivalence rests upon the fundamental selective property of the resonance function $\tilde{\chi}_P(y)$ when its argument $y=k/P$ corresponds to integer frequencies $k$.

From Proposition \ref{prop:chi_properties} (specifically Properties 1 and 4), the resonance function acts as an indicator function for divisibility by $P$:
\[
\tilde{\chi}_P\left(\frac{k}{P}\right) =
\begin{cases}
1, & \text{if } k \equiv 0 \pmod{P} \\
0, & \text{if } k \not\equiv 0 \pmod{P}
\end{cases}
\quad \text{for } k \in \mathbb{Z}.
\]
Substituting this property into the general RBF bias formula \eqref{eq:rbf_bias_general}:
\[
B_P[f] = \sum_{k \in \mathbb{Z} \setminus \{0\}} c_k \tilde{\chi}_P\left(\frac{k}{P}\right) = \sum_{\substack{k \in \mathbb{Z} \setminus \{0\} \\ k \equiv 0 \pmod{P}}} c_k \cdot (1) + \sum_{\substack{k \in \mathbb{Z} \setminus \{0\} \\ k \not\equiv 0 \pmod{P}}} c_k \cdot (0).
\]
The second summation vanishes identically. The first summation includes only those non-zero integer indices $k$ that are multiples of $P$. Letting $k=lP$ for $l \in \mathbb{Z} \setminus \{0\}$, the bias simplifies to:
\begin{equation} \label{eq:bias_classical_recovered}
B_P[f] = \sum_{l \in \mathbb{Z} \setminus \{0\}} c_{lP}.
\end{equation}
This is precisely the classical aliasing formula \eqref{eq:classical_error} for the trapezoidal rule error \cite{Trefethen2013}. The derivation highlights that the RBF formulation inherently captures aliasing through the filtering action of $\tilde{\chi}_P(k/P)$. This factor acts effectively as a sampling mask or discrete delta function in the frequency domain, selecting only those modes $k$ that are integer multiples of $P$ to contribute to the bias.

\subsection{Structural Insight from the RBF Formulation}
While mathematically equivalent to the classical sum \eqref{eq:bias_classical_recovered}, the RBF bias formula $B_P[f] = \sum_{k\neq 0} c_k \tilde{\chi}_P(k/P)$ provides enhanced structural interpretation. It explicitly models the error as arising from a filtering process, depicted conceptually in Figure \ref{fig:filter_effect}:
\begin{enumerate}
    \item \textbf{Input Spectrum:} The function's Fourier coefficients $c_k$ represent its intrinsic frequency content.
    \item \textbf{Grid Response Filter:} The complex resonance function $\tilde{\chi}_P(k/P)$ characterises the discrete grid's frequency-selective response at each mode $k$.
    \item \textbf{Output Bias:} The total bias $B_P[f]$ is the sum of the original spectral components $c_k$ ($k \neq 0$), each modulated (weighted) by the grid filter's response $\tilde{\chi}_P(k/P)$ at that frequency.
\end{enumerate}
This filtering perspective directly connects the error structure to the properties of the resonance function $\tilde{\chi}_P(y)$ explored in Section 4.

\begin{figure}[htbp] 
\centering
\includegraphics[width=0.75\textwidth]{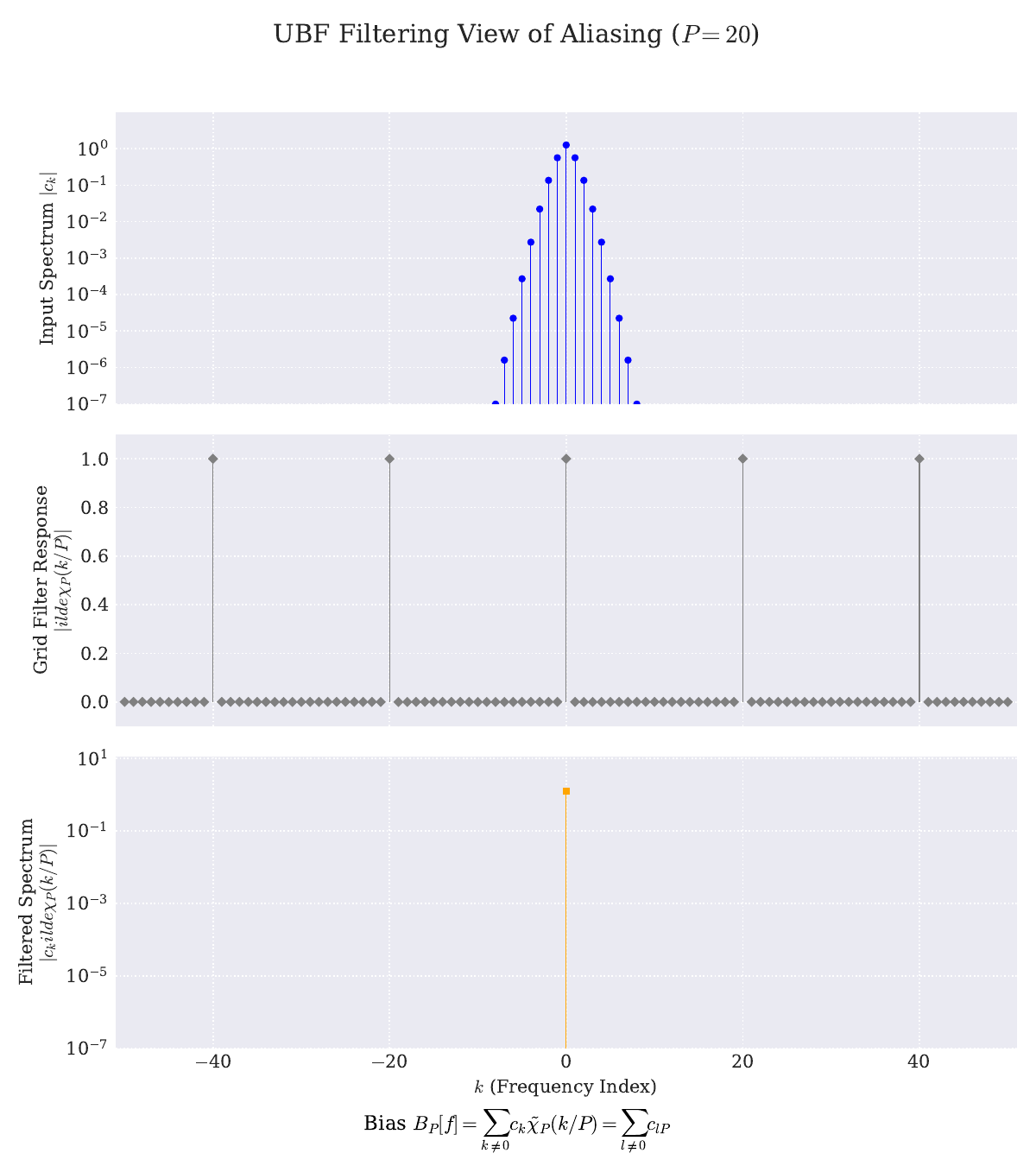} 
\caption{The RBF Filtering View of Aliasing for $P=20$. (Top) Example Input Spectrum $|c_k|$ (log scale, for $f(x)=e^{\cos(2\pi x)}$). (Middle) Grid Filter Response magnitude $|\tilde{\chi}_P(k/P)|$. For integer $k$, this response is $1$ only when $k$ is a multiple of $P=20$ and $0$ otherwise. The filter $\tilde{\chi}_P(k/P)$ thus exhibits a sharp selectivity, isolating the resonant frequencies $k=lP$. (Bottom) Filtered Spectrum magnitude $|c_k \tilde{\chi}_P(k/P)|$ (log scale), showing only the aliased spectral components ($k=lP$, $l \neq 0$) that survive the filtering process. The sum of these filtered components (correctly accounting for phase, not just magnitude) constitutes the bias $B_P[f] = \sum_{k \neq 0} c_k \tilde{\chi}_P(k/P) = \sum_{l \neq 0} c_{lP}$.}
\label{fig:filter_effect}
\end{figure}

This filtering perspective connects the error structure directly to the properties of $\tilde{\chi}_P(y)$ established in Section 4:
\begin{itemize}
    \item \textbf{Resonant (Aliased) Frequencies ($\mathbf{k=lP, l \neq 0}$):} For these frequencies, the relative frequency $y=k/P=l$ is a non-zero integer. Proposition \ref{prop:chi_properties} gives $\tilde{\chi}_P(l)=1$. The filter passes these spectral components $c_{lP}$ with unit gain, recovering the classical aliasing phenomenon.
    \item \textbf{Non-Resonant Integer Frequencies ($\mathbf{k \not\equiv 0 \pmod{P}}$):} For integer frequencies $k$ that are not multiples of $P$, the relative frequency $y=k/P$ is a non-integer rational. Proposition \ref{prop:chi_properties} states $\tilde{\chi}_P(k/P)=0$. The filter perfectly attenuates these components, ensuring they do not contribute to the bias sum \eqref{eq:rbf_bias_general}.
    \item \textbf{Filter Defined for All Frequencies:} It is important to note that while the RBF bias formula \eqref{eq:rbf_bias_general} ultimately only sums contributions where $\tilde{\chi}_P(k/P) \neq 0$ (i.e., for $k=lP$), the filter function $\tilde{\chi}_P(y)$ itself is defined for all real arguments $y$. Its rich structure across all frequencies (including oscillations and behavior at irrational $y$, see Section 4.6) provides a complete picture of the grid's response and may be relevant for analysing related problems or extensions of the framework.
\end{itemize}
Thus, the RBF formulation explicitly reveals how aliasing arises: through the selective filtering action of the grid's inherent resonance function $\tilde{\chi}_P$ acting on the function's spectrum.

This filtering viewpoint provides a causal interpretation: the trapezoidal rule error arises because the discrete grid acts as a frequency filter $\tilde{\chi}_P$, selectively transmitting only the aliased spectral components ($k=lP$) of the integrand $f$. This also clarifies a common point of confusion: simply truncating the Fourier series of $f$ to include only frequencies $|k| < P/2$ (i.e., below the Nyquist frequency relative to the grid spacing) does not guarantee a reduction in quadrature error. The error is fundamentally governed by the coefficients $c_{lP}$, which depend on the spectrum at multiples of $P$, potentially involving high frequencies regardless of the apparent bandwidth below $P/2$.

Furthermore, the RBF formulation naturally accommodates real-valued functions. If $f(x)$ is real, its Fourier coefficients satisfy the Hermitian symmetry $c_{-k} = \overline{c_k}$. Applying this to the bias formula \eqref{eq:bias_classical_recovered},
\[
B_P[f] = \sum_{l \in \mathbb{Z} \setminus \{0\}} c_{lP} = \sum_{l=1}^{\infty} (c_{lP} + c_{-lP}) = \sum_{l=1}^{\infty} (c_{lP} + \overline{c_{lP}}) = \sum_{l=1}^{\infty} 2 \text{Re}(c_{lP}),
\]
which is manifestly real. This confirms the physical consistency of the result derived using complex Fourier analysis and the complex resonance function $\tilde{\chi}_P$.

\subsection{Error Bounds and Convergence Rates}
The RBF framework readily recovers classical error bounds by relating the bias directly to the decay rate of the specific Fourier coefficients $c_{lP}$ selected by the filter $\tilde{\chi}_P(k/P)$.

First, consider functions with finite smoothness. Assume $f \in C^s(\mathbb{T})$ for some integer $s \ge 1$. Standard Fourier analysis results state that this implies $|c_k| = O(|k|^{-s})$ as $|k| \to \infty$. A slightly stronger condition, $f \in C^{s+1}(\mathbb{T})$, implies faster decay, $|c_k| \le C|k|^{-s-1}$ for some constant $C>0$.\footnote{Using the $O(|k|^{-s-1})$ decay (valid for $f \in C^{s+1}$) simplifies the summation argument via the Riemann zeta function. However, the fundamental convergence rate $O(P^{-s})$ associated with $f \in C^s$ can also be derived from the $O(|k|^{-s})$ decay, though the sum requires slightly more care \cite{DavisRabinowitz1984}.} Using this stronger decay and the bias formula $B_P[f] = \sum_{l \neq 0} c_{lP}$, we bound the error magnitude:
\[
|B_P[f]| \le \sum_{l \in \mathbb{Z} \setminus \{0\}} |c_{lP}| \le \sum_{l \neq 0} C |lP|^{-s-1} = C P^{-s-1} \sum_{l \neq 0} |l|^{-s-1}.
\]
The summation term converges to $2\zeta(s+1)$, where $\zeta$ is the Riemann zeta function (convergent for $s+1 \ge 2$, i.e., $s \ge 1$). This gives the bound:
\[
|B_P[f]| \le (2 C \zeta(s+1)) P^{-s-1} = O(P^{-s-1}).
\]
This demonstrates algebraic convergence. (Under the standard $f \in C^s$ condition, the rate is $O(P^{-s})$ \cite{Trefethen2013}).

Next, consider analytic functions on $\mathbb{T}$. For such functions, the Fourier coefficients decay exponentially: $|c_k| \le C e^{-\gamma |k|}$ for some constants $C, \gamma > 0$. Applying this to the bias sum yields:
\[
|B_P[f]| \le \sum_{l \neq 0} C e^{-\gamma |lP|} = 2C \sum_{l=1}^\infty (e^{-\gamma P})^l = 2C \frac{e^{-\gamma P}}{1 - e^{-\gamma P}}.
\]
Since $e^{-\gamma P} \to 0$ as $P \to \infty$, the bound simplifies to $O(e^{-\gamma P})$, matching the well-known exponential (or geometric) convergence rate for analytic periodic functions \cite{Waldvogel2006, Trefethen2013, TrefethenWeideman2014}.

In both cases, the RBF perspective clarifies that these convergence rates arise directly from the decay properties of the specific Fourier coefficients $c_{lP}$ that are selectively passed by the grid's resonance filter $\tilde{\chi}_P(k/P)$.

\subsection{Extension to Two Dimensions} \label{sec:2d_extension}

The RBF framework extends naturally to higher dimensions. We illustrate this for integration over the unit square $[0,1]^2$ using a bivariate function $f(x_1, x_2)$ that is 1-periodic in both variables. The $P \times P$ tensor-product trapezoidal rule uses grid points $\mathbf{x}_{\mathbf{j}} = \mathbf{j}/P$, where $\mathbf{j} = (j_1, j_2)$ with $j_1, j_2 \in \{0, \dots, P-1\}$. The approximation is:
\[
I_P[f] = \frac{1}{P^2} \sum_{j_1=0}^{P-1} \sum_{j_2=0}^{P-1} f\left(\frac{j_1}{P}, \frac{j_2}{P}\right).
\]
Using the 2D Fourier expansion $f(\mathbf{x}) = \sum_{\mathbf{k} \in \mathbb{Z}^2} c_{\mathbf{k}} e^{2\pi i \mathbf{k} \cdot \mathbf{x}}$ and interchanging sums (assuming absolute convergence):
\[
I_P[f] = \sum_{\mathbf{k} \in \mathbb{Z}^2} c_{\mathbf{k}} \left( \frac{1}{P^2} \sum_{j_1=0}^{P-1} \sum_{j_2=0}^{P-1} e^{2\pi i (k_1 j_1 / P + k_2 j_2 / P)} \right).
\]
The double sum defines the 2D complex resonance function. Due to the separability of the exponential term, this function factorises into a product involving the 1D resonance function $\tilde{\chi}_P$ (Definition \ref{def:resonance_functions}):
\begin{align}
\tilde{\chi}_P^{(2)}\left(\frac{k_1}{P}, \frac{k_2}{P}\right) &:= \frac{1}{P^2} \sum_{j_1, j_2=0}^{P-1} e^{2\pi i (k_1 j_1 / P + k_2 j_2 / P)} \nonumber \\
&= \left( \frac{1}{P} \sum_{j_1=0}^{P-1} e^{2\pi i k_1 j_1 / P} \right) \left( \frac{1}{P} \sum_{j_2=0}^{P-1} e^{2\pi i k_2 j_2 / P} \right) \nonumber \\
&= \tilde{\chi}_P\left(\frac{k_1}{P}\right) \tilde{\chi}_P\left(\frac{k_2}{P}\right). \label{eq:2d_resonance_factorisation}
\end{align}
Thus, the 2D resonance filter $\tilde{\chi}_P^{(2)}$ is simply the product of two 1D filters $\tilde{\chi}_P$. The general RBF bias formula in 2D, $B_P[f] = I_P[f] - c_{\mathbf{0}}$, then becomes:
\begin{equation} \label{eq:rbf_bias_2d}
B_P[f] = \sum_{\mathbf{k} \in \mathbb{Z}^2 \setminus \{\mathbf{0}\}} c_{\mathbf{k}} \, \tilde{\chi}_P\left(\frac{k_1}{P}\right) \tilde{\chi}_P\left(\frac{k_2}{P}\right).
\end{equation}
Recalling that $\tilde{\chi}_P(k_i/P)$ is non-zero (and equals 1) if and only if $k_i \equiv 0 \pmod{P}$ (Proposition \ref{prop:chi_properties}), the product filter $\tilde{\chi}_P(k_1/P) \tilde{\chi}_P(k_2/P)$ is non-zero (and equals 1) if and only if both $k_1 = l_1 P$ and $k_2 = l_2 P$ for integers $l_1, l_2$. Therefore, equation \eqref{eq:rbf_bias_2d} simplifies directly to the classical 2D aliasing formula:
\begin{equation} \label{eq:classical_alias_2d}
B_P[f] = \sum_{\mathbf{l} \in \mathbb{Z}^2 \setminus \{\mathbf{0}\}} c_{\mathbf{l}P},
\end{equation}
where $\mathbf{l}P = (l_1 P, l_2 P)$. This equivalence is confirmed numerically for test functions in Appendix A (Table \ref{tab:2d_validation_appendix}), validating the 2D RBF formulation.

The corresponding 2D real resonance landscape, $\chi_P^{(2)}(y_1, y_2) = \mathrm{Re}[\tilde{\chi}_P(y_1) \tilde{\chi}_P(y_2)]$, inherits its structure from the 1D function, exhibiting peaks at integer coordinates $(\mathbb{Z}^2)$ and lines of cancellation when $y_1$ or $y_2$ equals $n/P$ ($n \not\equiv 0 \pmod P$), as shown in Appendix A (Figure \ref{fig:2d_resonance_appendix}).

\section{Discussion}

The Resonance Bias Framework (RBF) provides a precise and structurally informative understanding of trapezoidal rule errors for periodic functions. Departing from traditional analyses focused on asymptotic bounds or viewing error as statistical noise, the RBF interprets the error as a deterministic, finite-sample phenomenon governed by the complex resonance function $\tilde{\chi}_P(y)$.

While mathematically equivalent to the classical aliasing sum $B_P = \sum_{l \neq 0} c_{lP}$, the RBF formulation, expressed as
\begin{equation} \label{eq:rbf_bias_repeat_discussion}
B_P[f] = \sum_{k \ne 0} c_k \, \tilde{\chi}_P\left(\frac{k}{P}\right),
\end{equation}
distinctly reveals the underlying spectral filtering structure. This perspective clarifies that the total bias arises from modulating each input Fourier mode $c_k$ ($k \neq 0$) by the grid's specific frequency response $\tilde{\chi}_P(k/P)$. Consequently, the RBF highlights how the interaction between the grid resolution $P$ and the function's detailed spectral content (specifically, the coefficients $c_{lP}$), rather than just overall smoothness, dictates the quadrature error.

\subsection{Structural Insight Beyond Asymptotic Analysis}
The RBF emphasises the error structure at any finite grid resolution $P$, offering insights beyond traditional asymptotic analysis. While classical bounds like $O(P^{-s})$ or $O(e^{-\gamma P})$ capture average decay rates, they often obscure the intricate, non-monotonic error behavior. In contrast, the RBF reveals, via the resonance function $\tilde{\chi}_P(y)$ (Section 4), that the error's magnitude and sign depend deterministically on the arithmetic relationship $k/P$ between spectral components and the grid. The resonance landscape (Figures \ref{fig:chi_P_landscape} and \ref{fig:chi_P_detail}), with its sharp peaks (resonances) and exact zeros (cancellations), demonstrates that the error is a \textit{structured signal}, not simply decaying noise. This framework precisely explains phenomena such as error plateaus, oscillations, or sharp transitions as $P$ varies (validated in Figure \ref{fig:bias_validation}), which are crucial in applications where $P$ is fixed or computationally constrained. This is distinct from classical error bounds, which are often asymptotic (large $P$) or provide worst-case estimates. The RBF formula describes the error structure for any finite $P$ and captures the specific grid-function interaction.

\subsection{Number-Theoretic Dimensions}
The RBF explicitly connects quadrature error to number-theoretic concepts. The behavior of $\tilde{\chi}_P(y)$ is intrinsically linked to the rational arithmetic of $y = k/P$, featuring properties like exact zeros at $y = n/P$ ($n \not\equiv 0 \pmod P$, see Figure \ref{fig:chi_P_detail}) and high sensitivity near integer resonances. This highlights the role of rational approximation and grid alignment effects. The distribution and structure of these zeros relate to patterns found in Farey sequences and continued fractions \cite{HardyWright1979}, reflecting grid symmetries and coprimality. This connection underscores links to number-theoretic fields like Diophantine approximation and the theory of equidistribution (relevant for the $P \to \infty$ limit, see Section 4.6). These observations imply that quadrature error magnitude depends subtly on the arithmetic nature of frequency indices relative to $P$—a dimension often overlooked by classical approaches focused primarily on function smoothness.

\subsection{Geometric Intuition}
The "rotating arrows" model (Section 4.4, Figure \ref{fig:arrows_geometric}) provides a compelling geometric interpretation for the complex resonance function $\tilde{\chi}_P(y)$. It represents $\tilde{\chi}_P(y)$ as the normalised vector sum (centroid) of $P$ unit vectors rotating incrementally by an angle $2\pi y$. Resonance ($y \in \mathbb{Z}$) corresponds to the perfect constructive alignment of all vectors, while cancellation ($y=n/P, n \not\equiv 0 \pmod P$) arises from vectors forming a symmetric configuration (roots of unity) summing to zero. This visualisation makes the complex interference patterns underlying the resonance landscape intuitive and reinforces $\tilde{\chi}_P(y)$'s role as a phase-sensitive filter kernel governing the grid's response.

\subsection{Potential Applications}
This deterministic, structure-aware perspective on quadrature error suggests several practical implications and potential applications:
\begin{itemize}
    \item \textbf{Diagnosing Error Sources:} Precisely identifying and quantifying error contributions from specific grid-frequency resonances in high-precision calculations, particularly in spectral methods or simulations involving periodic phenomena.
    \item \textbf{Analysing Long-Term Simulations:} Understanding and analysing the potential for structured error accumulation or artificial drift in dynamical systems (e.g., Hamiltonian systems) integrated over long times using fixed time steps.
    \item \textbf{Interpreting Discretisation Artifacts:} Understanding deterministic aliasing patterns in signal processing or fixed-point arithmetic where periodic signals interact with discrete sampling. The resonance function $\tilde{\chi}_P(y)$ (Figure \ref{fig:filter_effect}) directly parallels frequency responses and windowing functions in digital filter design, offering insight into spectral leakage.
    \item \textbf{Informing Algorithm Design (Potential):} Potentially guiding the selection of the number of grid points $P$ based on known spectral features of the integrand to avoid resonances, or inspiring adaptive quadrature methods that dynamically adjust $P$ or shift the grid to mitigate error amplification near resonant conditions.
\end{itemize}

\section{Future Work and Conclusion}

\subsection{Future Research Directions}
The Resonance Bias Framework (RBF) opens several promising avenues for further investigation, spanning theoretical extensions and computational applications.

\textbf{Theoretical Extensions:}
\begin{enumerate}
    \item \textit{Integrals over $\mathbb{R}$ and Analogues:} Adapt RBF concepts to analyse the structure of the Poisson summation error term $\sum_{k \neq 0} \hat{f}(2\pi k/h)$ arising in quadrature over $\mathbb{R}$ \cite{Trefethen2013, TrefethenWeideman2014}. Investigate whether continuous analogues of the resonance function $\tilde{\chi}_P$ can provide structural insight into error contributions from singularities or complex integration contours, potentially connecting to analyses like those in \cite{Waldvogel2006}. Furthermore, explore potential analogies between the RBF filtering mechanism and physical resonance phenomena.
    \item \textit{Higher Dimensions and General Lattices:} Extend the RBF rigorously to multivariate functions on general lattices, moving beyond the tensor-product grids treated in Section \ref{sec:2d_extension}. Analyse resonance phenomena for non-uniform sampling strategies, lattice rules, and quasi-Monte Carlo point sets.
    \item \textit{Deeper Arithmetic Connections:} Systematically explore the interplay between the fine structure of $\chi_P$ (Figure \ref{fig:chi_P_detail}), number-theoretic objects like Farey sequences and continued fractions, Diophantine approximation properties of frequencies, and potential links to modular forms or spectral theory on arithmetic groups.
\end{enumerate}

\textbf{Computational and Applied Directions:}
\begin{enumerate}
    \setcounter{enumi}{3} 
    \item \textit{Algorithm-Specific Error Analysis:} Apply the RBF perspective to diagnose and understand structured errors in practical algorithms involving discrete sums or transforms of periodic data, such as Fast Fourier Transforms (FFTs), spectral methods for PDEs, and N-body simulations.
    \item \textit{Alternative Quadrature Schemes:} Derive and analyse the characteristic resonance functions associated with other quadrature rules (e.g., Simpson's rule, Gaussian quadrature, Clenshaw-Curtis) when applied to periodic integrands, considering the impact of their specific nodes and weights.
    \item \textit{Wavelet and Multiresolution Methods:} Investigate potential connections or analogues between RBF resonance and error structures observed in wavelet-based or multiresolution numerical methods, aiming to understand scale-dependent error behavior.
\end{enumerate}

\subsection{Conclusion}
The Resonance Bias Framework (RBF) reinterprets trapezoidal rule error for periodic functions not as statistical noise, but as a deterministic, structured signal. This signal is governed by the arithmetic and geometric interplay between the function's Fourier spectrum and the sampling grid, mediated precisely by the complex resonance function $\tilde{\chi}_P(y)$. This perspective unifies the classical aliasing formula $B_P = \sum_{l \neq 0} c_{lP}$ with a mechanistic view of the error as a spectral filtering process (Figure \ref{fig:filter_effect}), revealing an underlying harmonic structure within discretisation effects.

The RBF treats quadrature error as a structured, interpretable object. The resonance landscape (Figures \ref{fig:chi_P_landscape}, \ref{fig:chi_P_detail}, validated in Figure \ref{fig:bias_validation}) dictates the error's detailed behavior, including sharp resonances and cancellations. By weaving together insights from numerical analysis, harmonic analysis (Figure \ref{fig:arrows_geometric}), and number theory, the RBF provides a foundation for developing more transparent, robust, and potentially adaptive numerical methods. Ultimately, the RBF framework offers a valuable lens through which to understand that what might appear as numerical noise often possesses a deep, underlying harmonic structure dictated by the interplay between the function's spectrum and the sampling parameter $P$. It provides a framework for uncovering this underlying structure in numerical phenomena across computational science.

\section*{Author's Note}
{\small
I am a second-year undergraduate in economics at the University of Bristol working independently. This paper belongs to a research programme I am developing across information theory, econometrics and mathematical statistics.

This work was produced primarily through AI systems that I directed and orchestrated. The AI generated the mathematical content, proofs and symbolic derivations based on my research questions and guidance. I have no formal mathematical training but am eager to learn through this process of directing AI-powered mathematical exploration.

My contribution involves designing research directions, evaluating and selecting AI outputs, and ensuring the coherence of the overall research agenda. All previously published work that influenced these results has been cited to the best of my knowledge and research capabilities in my current position. The presentation aims to be pedagogically accessible.
}

\appendix
\section{Appendix A: Supporting Details}
This appendix contains supplementary material, including minimal numerical verification of the RBF framework and a conceptual figure for the 2D resonance landscape.

\subsection{Minimal Numerical Verification}

\begin{table}[ht]
\centering
\caption{Numerical Verification of RBF Bias Formulas.}
\label{tab:2d_validation_appendix}
\begin{tabular}{llcl}
\toprule
Test Case & Method & Computed Bias $B_P[f]$ & Notes \\
\midrule
\multirow{2}{*}{\parbox{4.5cm}{\textbf{Case 1: 1D Prototype} \\ $f(x) = \sin^2(2\pi(2.3)x)$ \\ $P=20$}}
    & Direct $(I_P - I)$ & $-2.444718 \times 10^{-2}$ & Matched \\
    & RBF Proto (Eq.~\ref{eq:bias_sin2}) & $-2.444718 \times 10^{-2}$ & (Diff: $\approx 2.1 \times 10^{-17}$) \\
\addlinespace 
\multirow{3}{*}{\parbox{4.5cm}{\textbf{Case 2: 1D Equivalence} \\ $f(x) = \cos(8\pi x)$ \\ $P=4$}}
    & Direct $(I_P - I)$ & $1.0$ & Matched \\
    & RBF General (Eq.~\ref{eq:rbf_bias_general}) & $1.0$ & \\
    & Classical Alias (Eq.~\ref{eq:classical_error}) & $1.0$ & (Diffs: $0.0$) \\
\addlinespace
\multirow{3}{*}{\parbox{4.5cm}{\textbf{Case 3: 2D Equivalence} \\ $f(x_1,x_2) = \cos(8\pi x_1)\cos(8\pi x_2)$ \\ $P=4$}}
    & Direct $(I_P - I)$ & $1.0$ & Matched \\
    & RBF 2D (Eq.~\ref{eq:rbf_bias_2d}) & $1.0$ & \\
    & Classical 2D Alias (Eq.~\ref{eq:classical_alias_2d}) & $1.0$ & (Diffs: $0.0$) \\
\bottomrule
\end{tabular}
\par 
\textit{Note:} Calculations performed using Python/NumPy based on the analytical forms of the bias formulas. Differences reflect standard floating-point precision limits. The chosen functions have finite Fourier series, making sums exact. Case 1 validates the prototype formula \eqref{eq:bias_sin2}. Cases 2 and 3 validate the general RBF formulas (\eqref{eq:rbf_bias_general}, \eqref{eq:rbf_bias_2d}) and their equivalence to classical aliasing (\eqref{eq:classical_error}, \eqref{eq:classical_alias_2d}), specifically demonstrating the capture of aliasing effects where frequencies are multiples of $P$.
\end{table}

\subsection{2D Resonance Visualisation}

\begin{figure}[htbp]
\centering
\includegraphics[width=0.65\textwidth]{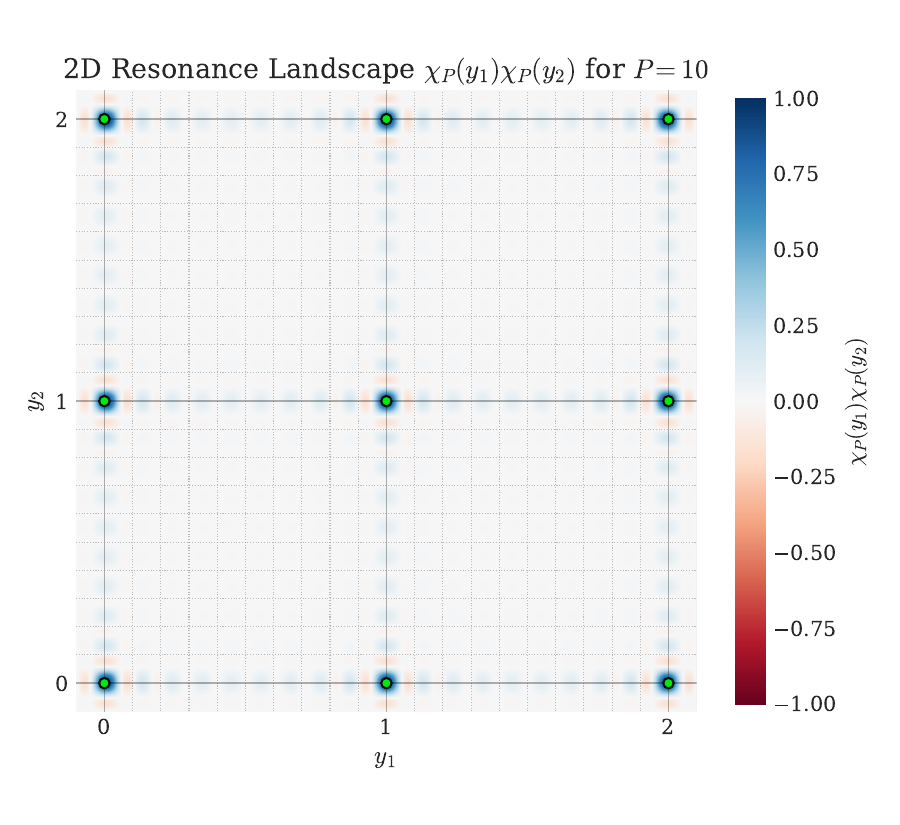} 
\caption{2D Resonance Landscape $\chi_P^{(2)}(y_1, y_2) = \chi_P(y_1)\chi_P(y_2)$ for $P=10$. The heatmap shows the value of the product of two 1D real resonance functions over the domain $[0, 2] \times [0, 2]$. Peaks (value 1, indicated by green circles) occur at integer coordinate pairs $(l_1, l_2)$. The function decays rapidly away from these peaks and exhibits zero values along the grid lines $y_1 = n/10$ and $y_2 = n/10$ for $n \in \{1, ..., 9\}$ (faint dotted grid lines).}
\label{fig:2d_resonance_appendix}
\end{figure}


\bibliographystyle{plain}
\bibliography{references}

\begin{thebibliography}{10}

\bibitem{DavisRabinowitz1984}
Philip~J. Davis and Philip Rabinowitz.
\newblock {\em Methods of Numerical Integration}.
\newblock Academic Press, 2 edition, 1984.
\newblock Original 1984 edition (ISBN 0‑12‑206360‑0); Dover reprint 2007 (ISBN 978‑0486453392); Elsevier 2014 paperback (ISBN 978‑1483237961) and eBook (ISBN 978‑1483264288).

\bibitem{HardyWright1979}
G.~H. Hardy and E.~M. Wright.
\newblock {\em An Introduction to the Theory of Numbers}.
\newblock Oxford University Press, 5 edition, 1979.
\newblock Copyright/catalog year 1979; paperback release April 17, 1980 (ISBNs above).

\bibitem{Henrici1982}
Peter Henrici.
\newblock {\em Essentials of Numerical Analysis with Pocket Calculator Demonstrations}.
\newblock John Wiley \& Sons, 1982.
\newblock Solutions manual also exists.

\bibitem{Katznelson2004}
Yitzhak Katznelson.
\newblock {\em An Introduction to Harmonic Analysis}.
\newblock Cambridge Mathematical Library. Cambridge University Press, 3 edition, 2004.
\newblock “Illustrated, revised” 3rd edition.

\bibitem{KuipersNiederreiter1974}
Lauwerens Kuipers and Harald Niederreiter.
\newblock {\em Uniform Distribution of Sequences}.
\newblock Wiley-Interscience, 1974.
\newblock Dover reprint May 2006 (ISBNs 978‑0486450193, 0486450198).

\bibitem{SteinShakarchi2003}
Elias~M. Stein and Rami Shakarchi.
\newblock {\em Fourier Analysis: An Introduction}.
\newblock Princeton Lectures in Analysis, vol.\ I. Princeton University Press, 2003.

\bibitem{Strichartz1994}
Robert~S. Strichartz.
\newblock {\em A Guide to Distribution Theory and Fourier Transforms}.
\newblock Studies in Advanced Mathematics. CRC Press, 1994.
\newblock World Scientific reprint June 2003 (multiple ISBNs; possible revised content).

\bibitem{Trefethen2013}
Lloyd~N. Trefethen.
\newblock {\em Approximation Theory and Approximation Practice}.
\newblock SIAM, 1 edition, 2013.
\newblock Extended Edition published 2019; errata available online.

\bibitem{TrefethenWeideman2014}
Lloyd~N. Trefethen and J.~A.~C. Weideman.
\newblock The exponentially convergent trapezoidal rule.
\newblock {\em SIAM Review}, 56(3):385--458, 2014.

\bibitem{Waldvogel2006}
Jörg Waldvogel.
\newblock Fast construction of the fejér and clenshaw--curtis quadrature rules.
\newblock {\em BIT Numerical Mathematics}, 46(1):195--202, 2006.

\end{thebibliography}

\end{document}